\documentclass{amsart}
\usepackage{graphicx}  	
\usepackage{amsmath,amssymb} 
\usepackage{amsthm}
\usepackage{bm}  		
\usepackage{pdfsync}  	
\usepackage{subfigure}
\usepackage{color}

\usepackage[T1]{fontenc}
\usepackage{yfonts}

\theoremstyle{plain}
\newtheorem{theorem}{Theorem}[section]
\newtheorem{proposition}[theorem]{Proposition}
\newtheorem{lemma}[theorem]{Lemma}
\newtheorem{corollary}[theorem]{Corollary}

\theoremstyle{definition}
\newtheorem{example}[theorem]{Example}
\newtheorem{definition}[theorem]{Definition}
\newtheorem{remark}[theorem]{Remark}

\newcommand{\C}{{\mathbb C}}
\newcommand{\R}{{\mathbb R}}

\newcommand{\Sc}{{\mathcal S}}
\newcommand{\U}{{\mathcal U}}

\newcommand{\x}{{\mathbf x}}
\newcommand{\yy}{{\mathbf y}}

\newcommand{\aaa}{{\mathbf a}}
\newcommand{\eee}{{\mathbf e}}
\newcommand{\ddd}{{\mathbf d}}
\newcommand{\bfalpha}{{\bm \alpha}}
\newcommand{\bfeta}{{\bm \eta}}
\newcommand{\f}{{\mathbf f}}

\newcommand{\pp}{{\mathfrak p}}

\newcommand{\PP}{{\mathfrak P}}


\def\LT{\mathop{\rm LT}\nolimits}

\def\Spec{\mathop{\rm Spec}\nolimits}

\def\y1{\mathbf{y}^{(1)}}

\let\epsilon=\varepsilon
\let\rho=\varrho
\let\phi=\varphi
\let\To=\longrightarrow
\def\TTo#1{\mathop{\longrightarrow}\limits ^{#1}}

\def\longiso{\,\smash{\TTo{\lower 7pt\hbox{$\scriptstyle\sim$}}}\,}
\let\implies=\Rightarrow

\def\cocoa{\mbox{\rm
C\kern-.13em o\kern-.07 em C\kern-.13em o\kern-.15em A}}
\def\apcocoa{\mbox{\rm
A\kern-0.13em p\kern -0.07em C\kern-.13em o\kern-.07 em C\kern-.13em
o\kern-.15em A}}

\begin{document}

\title{An Algebraic Approach to Hough Transforms}

\author{Mauro C. Beltrametti \ and\  \  Lorenzo Robbiano}
\address{\scriptsize Dipartimento di Matematica, 
\ Universit\`a di Genova, \ Via
Dodecaneso 35,\
I-16146\ Genova, Italy}
\email{beltrame@dima.unige.it, robbiano@dima.unige.it}


\date{\today}
              \keywords{Hough transform, Gr\"obner basis, 
              Family of schemes, Hough regularity}

\begin{abstract}

The main purpose of this paper  is to lay the foundations 
of a general theory which encompasses the features of the classical 
Hough transform and extend them to general algebraic objects 
such as affine schemes. The main motivation comes from
problems of detection of special shapes in medical 
and astronomical images. The classical Hough transform
has been used mainly to detect simple curves such as lines and circles.
We generalize this notion using reduced Gr\"obner bases of
flat families of affine schemes.
To this end we introduce and develop the 
theory of {\it Hough regularity}. The theory is highly effective 
and we give some examples computed with \cocoa (see~\cite{Co}). 
\end{abstract}

\subjclass[2010]{13P25, 13P10, 14D06}

\maketitle
\section{Introduction}
The Hough transform (or transformation) is a  technique mainly
used in image 
analysis  and digital image processing.
It was introduced by P.V.C. Hough in 1962  in the form 
of a patent (see~\cite{H}). 
Its original application was in physics  for 
detection of lines and arcs in the photographs 
obtained in particle detectors, and
many extensions and refinements of this method have 
been investigated since. In principle 
it can detect arbitrary shapes in images, given a 
parametrized description of the shape in question.

The main tool to achieve this result is a voting procedure 
in the parameter space. 
Roughly speaking, for instance  to detect line segments it 
works in the following way.
We can analytically describe a straight line  in a number of forms. 
Although in practice  the most convenient  representation is
the polar equation ${x\cos\theta +y\sin\theta =\rho}$, 
suppose that instead we use the cartesian equation $y=mx+n$.
For each point $(a_1,a_2)$ in the source image, 
a {\it potential}\/ line passing through it has 
parameters $m, n$ which necessarily satisfy
the equation $a_2 = ma_1+n$. 
This is the equation of  a straight line in the 
parameter space with coordinates $m,n$.
If many points  in the source image lie on the straight line $y=m_0x+n_0$, 
many lines in the parameter space will pass through the 
same {\it point} $(m_0, n_0)$.
A discretization of the parameter space into small 
cells and an {\it accumulator 
matrix}\/ whose entries count the number 
of lines passing through the cell will tell us 
that there is a maximum value corresponding to the cell 
containing $(m_0,n_0)$.
More details can be found for instance in the paper~\cite{DH}.

A key winning factor for this strategy is that in the plane, a 
straight line which is not parallel to the $y$-axis  has a well-defined 
unique representation of the  type  ${y=mx+n}$. 
Therefore, if a local maximum is obtained in the cell 
corresponding to $(m_0,n_0)$, the line of equation $y = m_0x+n_0$
is detected in the source image.
Moreover if two local maxima are obtained in different cells, 
they correspond to two {\it different lines}. This is a 
fundamental property which we call Hough regularity and which must be kept in every generalization.

In recent years, in particular in problems of recognition 
of special shapes in medical and astronomical images, 
much effort has been made to apply the above described procedure
to the detection of more complicated objects, in particular special 
algebraic plane and space curves.

In this paper we want to lay the foundations of a general 
theory which encompasses the features of the classical 
Hough transform and extends them to general algebraic objects 
such as affine schemes.

To this end, a family of algebraic schemes is required to have the 
property that its fibers are irreducible and
share the most essential properties such as the degree. 
So, we restrict ourselves to free families over an affine 
space and to achieve this property we use the notion of 
reduced Gr\"obner basis (see Section 2). 

Moreover, the uniqueness of the reduced Gr\"obner 
basis of an ideal, given a term 
ordering $\sigma$ (see~\cite{KR1}, Section 2.4.C),
implies the possibility of associating a 
well-defined set of coefficients to every fiber of the family. 

We define the Hough transform (Definition~\ref{Hough}) of a point, and
prove Theorem~\ref{cequalc} which describes the interplay 
between algebraic objects in the {\it source space}\/ and in the {\it parameter space}.
The main consequence is that we are in the position  of defining 
the key notion of Hough $\sigma$-regularity (see 
Definition~\ref{sigmaHoughregular}). It~describes the 
situation where equality of fibers implies equality of the 
corresponding parameters. 
In algebro-geometric terminology it means the following. If we consider the
uni-rational variety $V$ defined parametrically by the set
of coefficients of the reduced Gr\"obner bases of the generic fiber, 
the given parametrization represents $V$ as a rational variety.

Our main result is a criterion for detecting Hough $\sigma$-regularity
which is embodied in Thereom~\ref{main} which rests 
on Theorem~\ref{injectivity}.
It is then extended to the generic Hough regularity (see Therem~\ref{genericH}), while the special case where the fibers of our family are hypersurfaces is described in Remark~\ref{Hypersurfaces}.

Another important feature of our presentation is 
that Hough $\sigma$-regularity and generic Hough 
regularity are computable. The last section is devoted 
to the illustration of several examples computed with
the help of \cocoa\ (see~\cite{Co}).

Finally, special thanks are due to M. Piana and A.M. Massone who introduced us to the topic (see~\cite{BMP}) and to A.M. Bigatti who helped us in the implementation of the basic
\cocoa-functions used in Section~\ref{transform}.


\section{Families of Schemes}
\label{Families of Schemes}

We start the section by recalling some definitions.
The notation is borrowed
from~\cite{KR1} and~\cite{KR2}, in particular
we let~$x_1, \dots, x_n$ be
indeterminates and let $\mathbb T^n$
be the monoid of the power products in the
symbols $x_1, \dots, x_n$.
Most of the time, for simplicity we use the notation
$\x = x_1, \dots, x_n$.
If~$K$ is a field, the multivariate
polynomial ring~$K[\x]=K[x_1,\dots,x_n]$ is
denoted by~$P$, and if $f_1(\x), \dots, f_k(\x)$
are polynomials in $P$,
the set~$\{f_1(\x), \dots, f_k(\x)\}$ is denoted by~$\f(\x)$
(or simply by~$\f$).
Finally, we denote the {\it polynomial system}\/
associated to~$\f(\x)$  by~$\f(\x)=0$, 
and we say that the system is $d$-dimensional if 
the ideal generated by~$\f(\x)$ is $d$-dimensional
(see~\cite{KR1}, Section~3.7).

Let $n$ be a positive integer,
let~$P=K[x_1, \dots,x_n]$, 
let $\f(\x)=\{f_1(\x),\ldots,f_n(\x)\}$
define a $d$-dimensional irreducible, reduced scheme, 
and let~$I$ be the ideal of~$P$ generated by~$\f(\x)$.
We let~$ m$ be a positive integer
and let $\aaa = (a_1, \dots, a_m)$
be an $m$-tuple of indeterminates which will play the role of
parameters. If $F_1(\aaa, \x), \ldots, F_n(\aaa, \x)$ are polynomials
in~$K[\aaa, \x]$ we let~${F(\aaa, \x)=0}$ be the corresponding 
family of systems of  equations 
parametrized by~$\aaa$, and 
the ideal generated by~$F(\aaa,\x)$ in $K[\aaa,\x]$
is denoted by $I(\aaa,\x)$. 
If the scheme of the $\aaa$-parameters is denoted by $\Sc$,
then there exists a $K$-algebra homomorphism
$\phi: K[\aaa] \To K[\aaa,\x]/I(\aaa,\x)$ or, equivalently, 
a morphism of schemes $\Phi: \mathcal{F}  \To \Sc$ 
where $\mathcal{F}= \Spec(K[\aaa,\x]/I(\aaa,\x))$.

Although it is not strictly necessary for the theory, for our applications
it suffices to consider independent parameters. Here is 
the formal definition.

\begin{definition}\label{independparams}
If $\Sc= \mathbb A^m_K$ and 
${I(\aaa,\x)\cap K[\aaa] = (0)}$, then  the parameters~$\aaa$ are 
said to be  {\bf independent} with respect to $F(\aaa,\x)$, or simply 
independent if the context is clear.
\end{definition}

\bigskip
\goodbreak
\noindent
A  theorem called {\it generic flatness}\/ (see~\cite{E}, Theorem 14.4)
prescribes the existence of a non-empty Zariski-open
subscheme $\U$ of $\Sc$ over which the morphism of schemes
$\Phi^{-1}(\U) \To \U$ is {\it flat}. In particular, it is possible 
to explicitly compute a subscheme over which the morphism is free.
To do this, Gr\"obner bases reveal themselves as a fundamental tool.

\begin{definition}\label{iflat}
Let $F(\aaa, \x)$ be a family 
which contains a scheme defined by~$\f(\x)$.
Let $\Sc=\mathbb A^m_K$ be the scheme of the independent 
$\aaa$-parameters
and let 
$$
\Phi: \Spec(K[\aaa,\x]/I(\aaa,\x))  \To \Sc
$$ 
be the associated
morphism of schemes.
A dense Zariski-open subscheme~$\U$ of~$\Sc$
such that~${\Phi^{-1}(\U) \To \U}$ is
free (flat, faithfully flat), is said to be an~{\bf $I$-free 
(\hbox{$I$-flat}, $I$-faithfully\ flat}) subscheme of~$\Sc$ or
simply an~$I$-free ($I$-flat, $I$-faithfully flat) scheme.
\end{definition}

\begin{proposition}\label{flatness}
With the above assumptions and notation,
let $I(\aaa, \x)$ be the ideal 
generated by~$F(\aaa,\x)$ in $K[\aaa,\x]$, let $\sigma$ 
be a term ordering on~$\mathbb T^n$, let~$G(\aaa,\x)$ be the 
reduced~$\sigma$-Gr\"obner basis of the ideal~$I(\aaa, \x)K(\aaa)[\x]$, 
let~$d(\aaa)$ be the least common multiple of all the denominators  
of the coefficients of the polynomials in $G(\aaa,\x)$, 
and let~$T =\mathbb T^n\setminus \LT_\sigma(I(\aaa,\x)K(\aaa)[\x])$.

Then the open subscheme $\U$ of $\mathbb A^m_K$ 
defined by $d(\aaa)\ne 0$ is~$I$-free.
\end{proposition}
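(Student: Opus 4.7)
\medskip
\noindent\textbf{Proof plan.}
My plan is to prove the proposition by showing directly that the $K[\aaa]_{d(\aaa)}$-algebra
$$B \;=\; K[\aaa]_{d(\aaa)}[\x]\big/ I(\aaa,\x)\,K[\aaa]_{d(\aaa)}[\x]$$
is a free $K[\aaa]_{d(\aaa)}$-module, with basis given by the residue classes of the monomials in~$T$. Since~$\U = \Spec(K[\aaa]_{d(\aaa)})$ and ${\Phi^{-1}(\U) = \Spec(B)}$, this is exactly what $I$-freeness of~$\U$ demands.

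The first step is to unwind the definition of~$d(\aaa)$. By construction every coefficient of every element of $G(\aaa,\x)$ lies in $K[\aaa]_{d(\aaa)}$, so $G(\aaa,\x) \subseteq K[\aaa]_{d(\aaa)}[\x]$, and since $G(\aaa,\x)$ is the \emph{reduced} $\sigma$-Gr\"obner basis over $K(\aaa)[\x]$, each $g \in G(\aaa,\x)$ has, viewed as an element of $K[\aaa]_{d(\aaa)}[\x]$ with the ordering~$\sigma$ on $\mathbb T^n$, a leading term that is a pure monomial with coefficient~$1$. This is the essential hypothesis that makes the usual division algorithm terminate over the ring of coefficients $K[\aaa]_{d(\aaa)}$. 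Next I claim that $G(\aaa,\x)$ generates the extended ideal $I(\aaa,\x)K[\aaa]_{d(\aaa)}[\x]$: the inclusion~$\supseteq$ is immediate from $G(\aaa,\x) \subseteq I(\aaa,\x)K(\aaa)[\x] \cap K[\aaa]_{d(\aaa)}[\x]$ together with clearing denominators (all denominators involved divide a power of~$d(\aaa)$); for the reverse inclusion, each $F_i$ already reduces to $0$ modulo~$G(\aaa,\x)$ inside $K(\aaa)[\x]$, and since the reduction steps only involve multiplication by leading coefficient inverses—which are units in $K[\aaa]_{d(\aaa)}$—the very same sequence of reductions takes place inside $K[\aaa]_{d(\aaa)}[\x]$ and expresses~$F_i$ as a $K[\aaa]_{d(\aaa)}[\x]$-combination of~$G(\aaa,\x)$.

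With these facts in hand, the basis property of~$T$ follows by two short arguments. For spanning: any $p \in K[\aaa]_{d(\aaa)}[\x]$ can be divided by~$G(\aaa,\x)$ with respect to~$\sigma$, yielding a normal form whose support is contained in~$T$ and whose coefficients lie in~$K[\aaa]_{d(\aaa)}$; modulo~$I(\aaa,\x)K[\aaa]_{d(\aaa)}[\x]$ the class of~$p$ therefore lies in the $K[\aaa]_{d(\aaa)}$-span of~$T$. For linear independence: any relation $\sum_{t \in T} c_t(\aaa)\,t \in I(\aaa,\x)K[\aaa]_{d(\aaa)}[\x]$ with $c_t(\aaa) \in K[\aaa]_{d(\aaa)}$ remains a relation after scalar extension to $K(\aaa)$, but by Macaulay's basis theorem the classes of~$T$ form a $K(\aaa)$-basis of $K(\aaa)[\x]/I(\aaa,\x)K(\aaa)[\x]$, so all $c_t(\aaa)$ vanish. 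Combining these two statements gives the freeness of~$B$ and concludes the proof.

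The point I expect to require the most care is the second half of the first step: checking that $G(\aaa,\x)$ not only generates $I(\aaa,\x)K(\aaa)[\x]$ over the field $K(\aaa)$ but actually generates the extended ideal $I(\aaa,\x)K[\aaa]_{d(\aaa)}[\x]$ over the ring $K[\aaa]_{d(\aaa)}$. Everything else reduces to a careful but mechanical use of the division algorithm; this ideal-generation claim is really where the specific choice of the localizing element~$d(\aaa)$ enters, and it is the hinge on which freeness (and not merely freeness after passing to the generic fiber) rests.
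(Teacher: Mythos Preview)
Your approach is essentially the one taken in the paper: both arguments show that the residue classes of the monomials in $T$ form a free $K[\aaa]_{d(\aaa)}$-basis of $K[\aaa]_{d(\aaa)}[\x]/I(\aaa,\x)^e$. The paper's proof is terser and fiberwise in flavor: it cites Macaulay's basis theorem over $K(\aaa)$, then asserts that $G(\aaa,\x)$ specializes to the reduced Gr\"obner basis of $I(\bfalpha,\x)$ for every $\bfalpha$ with $d(\bfalpha)\ne 0$, and concludes freeness from the constancy of the leading-term ideal across fibers. Your argument works directly over the ring $K[\aaa]_{d(\aaa)}$, establishing spanning by the division algorithm (using that the elements of $G$ are monic) and linear independence by base change to $K(\aaa)$; this is more self-contained and makes the passage from fiberwise information to global module-freeness explicit rather than leaving it as a final leap.

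You are right that the equality of $(G)$ with $I(\aaa,\x)K[\aaa]_{d(\aaa)}[\x]$ is the hinge, and your justification for the inclusion $(G)\subseteq I(\aaa,\x)K[\aaa]_{d(\aaa)}[\x]$ deserves the caution you give it. The claim that ``all denominators involved divide a power of $d(\aaa)$'' is not automatic: the denominators arising when one expresses $g\in G$ as a $K(\aaa)[\x]$-combination of the $F_i$ come from the Buchberger computation and need not be visible in the coefficients of the final reduced basis (for instance, take $F=a_1x$: then $G=\{x\}$ and $d(\aaa)=1$, yet $x\notin (a_1x)$ in $K[a_1][x]$). The paper's proof glosses over exactly the same point when it asserts without argument that $G$ specializes correctly at every point of~$\U$. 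In the situations where the proposition is applied later this causes no trouble, but strictly speaking one should either track the leading coefficients inverted during Buchberger's algorithm and absorb them into $d(\aaa)$, or else argue the specialization claim more carefully.
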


\begin{proof}
We consider the morphism
${\rm Spec}\big(K(\aaa)[\x]/I(\aaa,\x)K(\aaa)[\x]\big) 
\To  {\rm Spec}(K(\aaa))$. 
A standard result in Gr\"obner basis theory 
(see for instance~\cite{KR1}, Theorem 1.5.7) 
shows that the residue classes of the elements 
in~$T$ form a~$K(\aaa)$-basis 
of this vector space. We denote by $\U$ the open subscheme 
of~$\mathbb A^m_K$ defined by $d(\aaa) \ne 0$. 
For every point in $\U$, the given 
reduced Gr\"obner basis evaluates to the reduced 
Gr\"obner basis of the corresponding ideal. 
Therefore the leading term ideal is the 
same for all these fibers, and so is its complement~$T$. 
If we denote by~$K[\aaa]_{d(\aaa)}$ 
the localization of~$K[\aaa]$ at the 
element  $d(\aaa)$ and by $I(\aaa,\x)^e$ the 
extension of the ideal~$I(\aaa,\x)$ to the 
ring~$K[\aaa]_{d(\aaa)}$, 
then  $K[\aaa]_{d(\aaa)}[\x]/I(\aaa,\x)^e$ turns out 
to be a free~$K[\aaa]_{d(\aaa)}$-module.
This consideration concludes the proof.
\end{proof}

\begin{remark}\label{varie}
We observe that the term ordering  $\sigma$ can be 
chosen arbitrarily.
\end{remark}

\begin{example}\label{twopoints}
We let  $P = \C[x]$, the univariate polynomial ring,  and 
embed the ideal~$I$ generated by the polynomial
${x^2-3x+2}$ into the generically zero-dimensional
family~${F(\aaa, x) = \{a_1x^2 - a_2x + a_3\}}$.
Such  family is given by the canonical $K$-algebra
homomorphism 
$$
\phi:\!  \C[\aaa]\! \To\! \C[\aaa, x]/(a_1x^2 - a_2x + a_3)
$$ 
It is a zero dimensional complete intersection~for

$\{(\alpha_1, \alpha_2, \alpha_3) \in \mathbb A_\C^3 \ | \  
 \alpha_1\ne 0\}\  \cup \
\{(\alpha_1, \alpha_2, \alpha_3) \in  \mathbb A_\C^3 \ | \    
\alpha_1=0, \ \alpha_2 \ne 0\}$.

\noindent It represents two distinct smooth points for

$\{(\alpha_1, \alpha_2, \alpha_3) \in A_\C^3 \ | \   \alpha_1 \ne 0, \
\alpha_2^2-4\alpha_1\alpha_3 \ne 0\}$.

\noindent It represents
a smooth point for
$\{(\alpha_1, \alpha_2, \alpha_3) \in A_\C^3 \ | \   
\alpha_1=0,\ \alpha_2\ne 0\}$.

\end{example}

\section{The Hough Transform}
\label{transform}

We are not assuming that $K$ is algebraically closed, hence 
we must distinguish between maximal ideals and maximal linear 
ideals. The last ones  correspond to $K$-rational points.

Suppose that $\Phi: \mathcal{F} \To \mathbb A^m_K$ 
represents a dominant family of  
sub-schemes of $\mathbb A^n_K$ parametrized by $\mathbb A^m_K$
which corresponds to a $K$-algebra 
homomorphism ${\phi: K[\aaa] \To K[\aaa,\x]/I(\aaa,\x)}$.
The dominance implies that the~$\aaa$-parameters are independent, 
therefore  $\phi$ is injective.
If we fix $\bfalpha = (\alpha_1, \dots, \alpha_m)$, a 
rational ``parameter point'' in $\mathbb A^m_K$,
we get a special fiber of $\Phi$, 
namely $\Spec(K[\bfalpha,\x]/I(\bfalpha,\x))$, hence a special member 
of the family.  Clearly we have $K[\bfalpha,\x]=K[\x]$ so that 
$I(\bfalpha,\x)$ can be seen as an ideal in $K[\x]$. With this convention
we denote the scheme $\Spec(K[\x]/I(\bfalpha,\x))$ by~$C_{\bfalpha,\x}$.

On the other hand,  there exists another 
morphism $\Psi: \mathcal{F} \To \mathbb A^n_K$  which 
corresponds to the $K$-algebra 
homomorphism $\psi: K[\x] \To K[\aaa,\x]/I(\aaa,\x)$.
If we fix a rational point  $p=(\xi_1, \dots, \xi_n) \in \mathbb A_K^n$, we 
get a special fiber of the morphism~$\Psi$, 
namely $\Spec(K[\aaa,p]/I(\aaa,p))$.  Clearly we have $K[\aaa,p]=K[\aaa]$ 
so that ~$I(\aaa,p)$ can be seen as an ideal in $K[\aaa]$. 
With this convention we denote the scheme $\Spec(K[\aaa]/I(\aaa,p))$
by~$\Gamma_{\aaa,p}$. If $p \in C_{\bfalpha,\x }$ then 
the pair  $( C_{\bfalpha,\x }, p)$ will be called a {\bf pointed fiber} 
of $\Phi$.

\begin{remark}
We observe that a rational  zero of $I(\aaa,p)$ 
is an $m$-tuple~$\bfalpha$ such that ${f(\bfalpha, p) =0}$ for 
every $f(\aaa,p) \in I(\aaa,p)$. 
Therefore $I(\aaa,p)$  corresponds to 
the sub-scheme of $\mathbb A^m_K$ which parametrizes 
the fibers of $\Phi$ which contain the rational point~$p$.
\end{remark}

\begin{definition}\label{Hough}{\bf (The Hough Transform)}\\
We use the notation introduced above, we let
$\bfalpha = (\alpha_1, \dots, \alpha_m)\in \mathbb A^m_K$ 
and let ${p=(\xi_1, \dots, \xi_n)\in  A^n_K}$.  
Then the scheme $\Gamma_{\!\aaa, p}$ 
 is said to be 
the {\bf Hough transform} of the point $p$ with respect to the family $\Phi$.
If it is clear from the context, we simply say that the scheme $\Gamma_{\!\aaa, p}$ is
the {\bf Hough transform} of the point $p$.
\end{definition}

\begin{example}
Let $\aaa = (a,b)$, let $\x= (x,y)$,
and let $\mathcal{F} = \Spec(K[\aaa, \x]/ F_{\aaa,\x})$,
where 
$$F_{\aaa,\x}= y(x-ay)^2-b(x^4+y^4)$$
Then let $\bfalpha=(1,1)$. 
The corresponding fiber in $\mathbb A_K^2$ is 
$C_{(1,1),\,\x}$ which is defined by the polynomial
$F_{(1,1),\,\x}=  y(x-y)^2-(x^4+y^4)$.
The point $p=(0,1)$ belongs to $C_{(1,1),\,\x}$ and it 
corresponds to the curve $\Gamma_{\aaa,  (0,1)}$ which 
is defined by the polynomial
$F_{\aaa, (0,1)} = a^2-b$.
The curve $\Gamma_{\aaa, (0,1)}$  is the Hough transform of 
the point $(0,1)$.
\end{example}

\bigskip

Next, we are going to describe some properties of the 
Hough transforms.

\begin{proposition}\label{alfaalfa'}
Let $\Phi: \mathcal{F} \To \mathbb A^m_K$ 
be a dominant family of  
sub-schemes of $\mathbb A^n_K$ parametrized 
by $\mathbb A^m_K$
and let $\bfalpha$, $p$, $C_{\bfalpha,\x}$, $\Gamma_{\aaa, p}$ 
be as described above.

\begin{itemize}
\item[(a)] We have 
$\bfalpha \in \cap_{p\in C_{\bfalpha, \x} }{\Gamma_{\aaa,p} }$.
\item[(b)] 
If $\bfeta \in  \cap_{p\in C_{\bfalpha, \x} }{\Gamma_{\aaa,p} }$ 
then $C_{\bfalpha,\x} \subseteq C_{\bfeta,\x}$.
\end{itemize}
\end{proposition}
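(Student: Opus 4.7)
The plan is to extract both statements from a single symmetry observation. For any $f(\aaa,\x)\in I(\aaa,\x)$, any $\bfbeta\in\mathbb A^m_K$, and any $q\in\mathbb A^n_K$, the scalar $f(\bfbeta,q)\in K$ is unambiguous, and the two readings of its vanishing coincide. Explicitly, denoting by $\mathfrak m_q\subset K[\x]$ and $\mathfrak m_{\bfbeta}\subset K[\aaa]$ the maximal linear ideals of the corresponding rational points, the inclusion $I(\bfbeta,\x)\subseteq\mathfrak m_q$ (which defines $q\in C_{\bfbeta,\x}$) and the inclusion $I(\aaa,q)\subseteq\mathfrak m_{\bfbeta}$ (which defines $\bfbeta\in\Gamma_{\aaa,q}$) both translate into the same polynomial condition, namely $f(\bfbeta,q)=0$ for every $f\in I(\aaa,\x)$. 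Consequently
$$
q \in C_{\bfbeta,\x} \iff \bfbeta \in \Gamma_{\aaa,q}.
$$

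Part (a) is then immediate from the equivalence with $\bfbeta=\bfalpha$: for every $p\in C_{\bfalpha,\x}$ one reads off $\bfalpha\in\Gamma_{\aaa,p}$, and since $p$ was arbitrary, $\bfalpha\in\bigcap_{p\in C_{\bfalpha,\x}}\Gamma_{\aaa,p}$. For part (b), let $\bfeta$ lie in this intersection, and pick any $p\in C_{\bfalpha,\x}$. The hypothesis gives $\bfeta\in\Gamma_{\aaa,p}$, and the equivalence applied with $\bfbeta=\bfeta$, $q=p$ then gives $p\in C_{\bfeta,\x}$; since $p$ was arbitrary, $C_{\bfalpha,\x}\subseteq C_{\bfeta,\x}$.

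The whole argument is a definition-chase and presents no genuine obstacle. The only point worth flagging, and the reason Section~3 is careful to distinguish maximal linear ideals, is the interpretation of the inclusion in (b) when $K$ is not algebraically closed: the chase produces the containment on $K$-rational points directly, and to lift it to the scheme-theoretic containment $I(\bfeta,\x)\subseteq I(\bfalpha,\x)$ one reruns the same computation after base change to $\bar K$ and invokes the Nullstellensatz, using that $I(\bfalpha,\x)$ is radical under the standing hypothesis from Section~\ref{Families of Schemes} that $\f(\x)$ cuts out a reduced scheme.
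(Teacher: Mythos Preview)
Your proof is correct and follows essentially the same approach as the paper's: both arguments rest on the tautological equivalence $p\in C_{{\bm\beta},\x}\iff{\bm\beta}\in\Gamma_{\aaa,p}$, which the paper phrases as ``$({\bm\beta},p)$ is a rational point of $\mathcal{F}$''. Your final paragraph is unnecessary here, since the paper reads the inclusion $C_{\bfalpha,\x}\subseteq C_{\bfeta,\x}$ at the level of $K$-rational points (cf.\ Example~\ref{real}) and postpones any appeal to the Nullstellensatz to Theorem~\ref{cequalc}, where $K$ is explicitly assumed algebraically closed.
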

\proof
To prove (a) we observe that by definition 
$(\bfalpha, p)$ is a rational point of $\mathcal{F}$
for every $p \in C_{\bfalpha,\x}$ . This statement can be rephrased 
by saying that   $\bfalpha \in \Gamma_{\aaa,p}$ for 
every~$p \in C_{\bfalpha,\x}$.
Now we prove (b). To say 
that $\bfeta \in \cap_{p\in C_{\bfalpha, \x} }{\Gamma_{\aaa,p} }$
is equivalent to saying that $(\bfeta, p)$ is a rational 
point of $\mathcal{F}$ for all the points $p$ such 
that  $(\bfalpha, p)$ is a rational point of $\mathcal{F}$.
This fact can be rephrased by saying 
that $C_{\bfalpha,\x} \subseteq C_{\bfeta,\x}$.
\endproof

\begin{example}\label{real}
Let $\aaa = (a,b)$, let $\x= (x,y)$,
and let $\mathcal{F} = \Spec(\R[\aaa, \x]/ F_{\aaa,\x})$,
where 
$$F_{\aaa,\x}= x^2+ay^2-b$$
Then let $\bfalpha = (1,0)$. The corresponding fiber 
in $\mathbb A_K^2$ is 
$C_{(1,0),\,\x}$ which is defined by the polynomial
$F_{(1,0),\,\x}=  x^2+y^2$. There is only one rational 
point on it, i.e.\ $p = (0,0)$. It
corresponds to the curve $\Gamma_{(\aaa;\, 0,0)}$ 
which is defined by the polynomial~${F_{\aaa, (0,0)} = b}$.
Therefore $ \cap_{p\in C_{(1,0), \x} }{\Gamma_{\aaa,p} } 
= \Gamma_{(\aaa;\, 0,0)}$. 
We observe that  $\bfeta = (-1,0)$ belongs 
to $ \Gamma_{(\aaa;\, 0,0)}$
and that $C_{(-1,0),\x}$ is defined by the 
polynomial $F_{(-1,0),\x } = x^2-y^2$.
It consists of two lines which pass through the origin.
Consequently $C_{(1,0),\x} \subset C_{(-1,0),\x}$.
\end{example}

While Example~\ref{real} shows that in general 
$C_{\bfalpha,\x} \subset C_{\bfeta,\x}$  (see statement (b) of the above
proposition), we have a better result if $K$ is algebraically closed,
but we have to rule out some extremal cases, as the following 
example shows.

\begin{example}\label{twopoints}
Let $\aaa = a$, let $\x= x$,
and let $\mathcal{F} = \Spec(\C[a,x]/ F_{a,x})$,
where 
$$F_{a,x}= ax^2+x$$
Then let $\alpha = 0$. The corresponding fiber 
in $\mathbb A_\C^1(x)$ is 
$C_{0,x}$ which is defined by the polynomial
$F_{0,x}=  x$. There is only one rational 
point on it, i.e.\ $p = 0$. It
corresponds to the entire line $\mathbb A_\C^1(a)$ 
which is defined by the polynomial~${F_{a,0} = 0}$.
Therefore $ \cap_{p\in C_{0,x} }{\Gamma_{a,0} }= 
\mathbb A_\C^1(a)$. 
We observe that  $\eta = 1$ belongs 
to $\mathbb A_\C^1(a)$
and that~$C_{1,x}$ is defined by the 
polynomial $F_{1,x } = x^2+x$, so that
$C_{1,x}=\{0, 1\}$.
Consequently $C_{0,x} \subset C_{1,x}$.
\end{example}

As previously announced, we need to rule out extremal cases. 
Since the application that we have in mind deal with continuous families,
we resort to Proposition~\ref{flatness}.  
Moreover, instead of seeking the broadest generality, we 
mainly consider irreducible fibers of $\Phi$.
Finally, we use
degree-compatible term orderings, the reason being that 
the fibers in our families must share the affine Hilbert function. In particular, 
if the fibers are hypersurfaces we want that they share the degree.

\medskip

We start our investigation by recalling a straightforward result.

\begin{lemma}\label{equalradical}
Let $K[\x]$ be a polynomial ring over a field $K$ and let $I$, $J$ 
be ideals in~$P$ such that 
$\dim(P/I)=\dim(P/J)$, the radicals of  $I$ and $J$ are prime ideals,  
and $\sqrt{J}\subseteq \sqrt{I}$. 
Then $\sqrt{J} =  \sqrt{I}$.
\end{lemma}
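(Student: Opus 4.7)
The plan is to reduce the statement to a well-known fact about finitely generated domains over a field: a surjective $K$-algebra homomorphism between two such domains of the same Krull dimension is an isomorphism. Since $\sqrt{I}$ and $\sqrt{J}$ are assumed to be prime and $\sqrt{J}\subseteq \sqrt{I}$, the inclusion induces a surjection of integral domains $P/\sqrt{J} \longepi P/\sqrt{I}$, and once we know both domains have the same dimension, the kernel must be zero, giving $\sqrt{I}=\sqrt{J}$.

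First I would check that the dimension hypothesis transfers cleanly to the radicals. Passing to the radical does not change the Krull dimension of the quotient, so
\[
\dim(P/\sqrt{I}) \;=\; \dim(P/I) \;=\; \dim(P/J) \;=\; \dim(P/\sqrt{J}).
\]
Both quotients are finitely generated $K$-algebras, and by hypothesis they are also integral domains.

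Next I would invoke the standard dimension fact: if $A$ is a finitely generated $K$-algebra that is a domain and $\mathfrak{p}$ is a nonzero prime ideal of $A$, then $\dim(A/\mathfrak{p}) < \dim(A)$. This follows from Noether normalization or, equivalently, from the fact that for such algebras the Krull dimension equals the transcendence degree of the field of fractions over $K$, and every nonzero prime strictly decreases this transcendence degree. Applying this with $A = P/\sqrt{J}$ and $\mathfrak{p} = \sqrt{I}/\sqrt{J}$: if $\mathfrak{p}$ were nonzero, the dimension of $P/\sqrt{I} \cong (P/\sqrt{J})/\mathfrak{p}$ would be strictly smaller than $\dim(P/\sqrt{J})$, contradicting the equality just established. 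Hence $\sqrt{I}/\sqrt{J} = (0)$, which is exactly $\sqrt{J} = \sqrt{I}$.

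There is no real obstacle here; the only nontrivial ingredient is the dimension formula for finitely generated domains over a field, which is classical. The proof is essentially a one-line argument once the primality of the radicals and the equality of dimensions are used to force the surjection of domains to be an isomorphism.
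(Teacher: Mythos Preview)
Your proof is correct and follows exactly the same route as the paper: both reduce to the fact that a strict inclusion of primes $\pp \subsetneq \PP$ in a polynomial ring over a field forces $\dim(P/\PP) < \dim(P/\pp)$, contradicting the equality of dimensions. The paper's version is simply terser, leaving implicit the justification via transcendence degree that you spell out.
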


\proof  
Let $d =\dim(P/I)$, let $\pp = \sqrt{J}$, and let $\PP = \sqrt{I}$. We have $\pp \subseteq \PP$.
On the other hand, we have $\dim(P/\pp) = \dim(P/\PP)$, hence $\pp = \PP$.
\endproof

\begin{remark}
The example with $I = (x^2,y)$, $J=(x,y^2)$ excludes the conclusion that $I=J$ in the previous lemma. The example $I=(x)$, $J=(x^2-xy)$ excludes the possibility of assuming $I$ and $J$ to be equidimensional even if they belong to the flat family defined by $x^2+axy=0$.
\end{remark}

\begin{definition}\label{restriction}
Let  $\Phi: \mathcal{F} \To \mathbb A^m_K$ be a dominant 
 family of sub-schemes of $\mathbb A^n_K$ parametrized 
 by $\mathbb A^m_K$. Then let $\sigma$  be
a degree-compatible 
term ordering on $\mathbb T^n$,
let $G(\aaa, \x)$ be the reduced $\sigma$-Gr\"obner basis
of the ideal $I(\aaa, \x)K(\aaa)[\x]$, and let $d_\sigma(\aaa)$ be the 
least common multiple of all the denominators of the coefficients 
of the polynomials in $G(\aaa,\x)$. We say that $d_\sigma(\aaa)$ is 
the {\bf $\sigma$-denominator} of $\Phi$. Moreover, 
let $\mathcal{U}_\sigma=A^m_K\setminus \{d_\sigma(\aaa) = 0\}$ and let 
$\Phi_{|d_\sigma(\aaa)}: \Phi^{-1}(\mathcal{U}) \To 
\mathcal{U}_\sigma$ be the corresponding restriction of $\Phi$.
We say that $\Phi_{|d_\sigma(\aaa)}$ is the {\bf $\sigma$-flat 
restriction} of $\Phi$.
\end{definition}

\begin{theorem}\label{cequalc}
Let $K$ be algebraically closed, let  $\Phi: \mathcal{F} \To \mathbb A^m_K$ be a dominant 
 family of sub-schemes of $\mathbb A^n_K$ parametrized 
 by $\mathbb A^m_K$, let $\sigma$  be
a degree-compatible 
term ordering on $\mathbb T^n$, let $d_\sigma(\aaa)$ be 
a $\sigma$-denominator of  $\, \Phi$, and let $\Phi_{|d_\sigma(\aaa)}$ be the
corresponding $\sigma$-flat restriction.
If we restrict to the family $\Phi_{|d_\sigma(\aaa)}$,  let $\bfalpha$, $p$, $C_{\bfalpha,\x}$, $\Gamma_{\aaa, p}$ 
be as described before,  
let ${\bfeta \in  \cap_{p\in C_{\bfalpha, \x} }{\Gamma_{\aaa,p} }}$, 
and assume that $C_{\bfalpha,\x}$ and $C_{\bfeta,\x}$ are irreducible,
then we have~${C_{\bfalpha,\x} =C_{\bfeta,\x}}$.
\end{theorem}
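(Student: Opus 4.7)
The plan is to translate the geometric inclusion supplied by Proposition~\ref{alfaalfa'}(b) into an inclusion of radical ideals, and then promote it to an equality by combining the $\sigma$-flatness hypothesis with Lemma~\ref{equalradical}.

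First I would recall that the hypothesis $\bfeta \in \bigcap_{p \in C_{\bfalpha,\x}}\Gamma_{\aaa,p}$ is precisely the one in Proposition~\ref{alfaalfa'}(b), so the set-theoretic inclusion $C_{\bfalpha,\x} \subseteq C_{\bfeta,\x}$ is already in hand. Since $K$ is algebraically closed, the Hilbert Nullstellensatz translates this into the containment of radicals $\sqrt{I(\bfeta,\x)} \subseteq \sqrt{I(\bfalpha,\x)}$ inside $P = K[\x]$.

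Next, I would exploit the restriction to the $\sigma$-flat family $\Phi_{|d_\sigma(\aaa)}$ to compare the two fibers numerically. By Proposition~\ref{flatness}, the monomial complement $T = \mathbb T^n \setminus \LT_\sigma(I(\aaa,\x)K(\aaa)[\x])$ of the leading term ideal of the generic fiber specializes to a $K$-basis of each quotient $P/I(\bfalpha,\x)$ and $P/I(\bfeta,\x)$. Because $\sigma$ is degree-compatible, the degree-$\le k$ part of $T$ has cardinality independent of the chosen fiber, so the two quotients share the same affine Hilbert function, and in particular
\[
\dim\bigl(P/I(\bfalpha,\x)\bigr) \;=\; \dim\bigl(P/I(\bfeta,\x)\bigr).
\]

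To conclude, I would invoke Lemma~\ref{equalradical}: the irreducibility of $C_{\bfalpha,\x}$ and $C_{\bfeta,\x}$ gives that both radicals are prime, while the previous two steps provide the one-sided inclusion of radicals and the equality of Krull dimensions. The lemma then forces $\sqrt{I(\bfalpha,\x)} = \sqrt{I(\bfeta,\x)}$, which is exactly the equality $C_{\bfalpha,\x} = C_{\bfeta,\x}$ at the level of underlying reduced subschemes claimed by the theorem. I expect the main delicacy to be the dimension-equality step: $\sigma$-flatness alone does not force the ideals themselves to coincide, only their radicals, and one must be careful to use the degree-compatibility of $\sigma$ (and not merely flatness) to transfer the \emph{affine} Hilbert function across fibers---without degree-compatibility, the Krull dimensions of the fibers need not match, and Lemma~\ref{equalradical} cannot be invoked.
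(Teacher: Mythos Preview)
Your proposal is correct and follows essentially the same route as the paper's proof: use Proposition~\ref{alfaalfa'}(b) and the Nullstellensatz for the inclusion of radicals, use the $\sigma$-flat restriction together with degree-compatibility to get equality of affine Hilbert functions (hence of Krull dimensions), and then invoke Lemma~\ref{equalradical} with the primality coming from irreducibility. Your remarks on why degree-compatibility is the crucial ingredient for the dimension step are apt and slightly more explicit than what the paper records.
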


\goodbreak

\proof
We know that the morphism $\Phi_{|d_\sigma(\aaa)}$ is flat and that 
$G(\aaa,\x)$  specializes to the reduced $\sigma$-Gr\"obner 
basis of $I(\bfalpha,\x)$ for  every $\bfalpha \in \mathcal{U}_\sigma$ 
(see for instance the proof of Proposition 2.3 in~\cite{RT}).
By assumption, the term ordering $\sigma$ is 
degree-compatible, hence all the fibers of $\Phi_{|d_\sigma(\aaa)}$ 
share the same affine Hilbert function  (see Chapter 5 of~\cite{KR2}), hence
they share the same dimension.
On the other hand, we know that $C_{\bfalpha,\x} \subseteq C_{\bfeta,\x}$ by 
Proposition~\ref{alfaalfa'} and hence the assumption that $K$ is algebraically 
closed implies that $\sqrt{I(\eta, \x)} \subseteq \sqrt{I(\alpha, \x)}$. 
The assumption that $C_{\bfalpha,\x}$ and $C_{\bfeta,\x}$ are irreducible implies
that these two radical ideals are prime, hence we conclude
the proof using Lemma~\ref{equalradical}.
\endproof


\begin{definition}\label{sigmaHoughregular}
With the assumptions as in Theorem~\ref{cequalc}, 
let $\cap_{p\in C_{\bfalpha, \x} }{\Gamma_{\aaa,p} } =
\{\bfalpha\}$ for all $\bfalpha \in \mathcal{U}_\sigma$.
Then $\Phi_{|d_\sigma(\aaa)}$ is said to be {\bf Hough $\sigma$-regular}.

\end{definition}

An immediate consequence of the theorem is the following corollary.

\begin{corollary}\label{alfa}
With the same assumptions as in the theorem, the following conditions are equivalent
\begin{itemize}
\item[(a)] For all 
$\bfalpha, \bfeta \in \mathcal{U}$, the equality $C_{\bfalpha,\x} =C_{\bfeta,\x}$ 
implies $\bfalpha = \bfeta$.
\item[(b)] The morphism $\Phi_{|d_\sigma(\aaa)}$ is  Hough $\sigma$-regular.
\end{itemize}
\end{corollary}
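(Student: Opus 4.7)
The plan is to prove the two implications directly, using Proposition~\ref{alfaalfa'}(a) as the source of the ``obvious'' containment $\bfalpha \in \bigcap_{p\in C_{\bfalpha,\x}}\Gamma_{\aaa,p}$, and using Theorem~\ref{cequalc} (with its irreducibility and $\sigma$-flatness hypotheses, inherited from the theorem's assumptions) as the bridge that turns membership in the intersection into equality of fibers.

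For the direction (a)$\Rightarrow$(b), I would fix $\bfalpha \in \mathcal{U}_\sigma$ and show both inclusions in $\bigcap_{p\in C_{\bfalpha,\x}}\Gamma_{\aaa,p} = \{\bfalpha\}$. The inclusion $\{\bfalpha\} \subseteq \bigcap_{p\in C_{\bfalpha,\x}}\Gamma_{\aaa,p}$ is precisely Proposition~\ref{alfaalfa'}(a). For the reverse inclusion, I would pick an arbitrary $\bfeta \in \bigcap_{p\in C_{\bfalpha,\x}}\Gamma_{\aaa,p}$; then Theorem~\ref{cequalc} gives $C_{\bfalpha,\x} = C_{\bfeta,\x}$, and hypothesis (a) then forces $\bfeta = \bfalpha$.

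For the direction (b)$\Rightarrow$(a), I would assume $C_{\bfalpha,\x} = C_{\bfeta,\x}$ for some $\bfalpha,\bfeta \in \mathcal{U}_\sigma$. Applying Proposition~\ref{alfaalfa'}(a) with $\bfeta$ in place of $\bfalpha$ gives $\bfeta \in \bigcap_{p\in C_{\bfeta,\x}}\Gamma_{\aaa,p}$. Rewriting the intersection using $C_{\bfeta,\x} = C_{\bfalpha,\x}$ and then invoking (b), we conclude $\bfeta \in \{\bfalpha\}$, so $\bfalpha = \bfeta$.

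There is no real obstacle here: the corollary is a purely formal consequence of Theorem~\ref{cequalc} combined with Proposition~\ref{alfaalfa'}(a). The only subtlety to flag in the write-up is that Theorem~\ref{cequalc} requires both $C_{\bfalpha,\x}$ and $C_{\bfeta,\x}$ to be irreducible, together with $K$ algebraically closed and $\sigma$ degree-compatible; these are all part of the ``same assumptions as in the theorem'' carried into the statement of the corollary, so they may be invoked freely without additional justification.
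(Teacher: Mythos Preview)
Your proof is correct and follows the same logic as the paper's, just spelled out in more detail; the paper compresses both implications into a single sentence by noting that Theorem~\ref{cequalc} converts membership in $\bigcap_{p\in C_{\bfalpha,\x}}\Gamma_{\aaa,p}$ into the equality $C_{\bfalpha,\x}=C_{\bfeta,\x}$ and declaring the rest immediate. Your explicit use of Proposition~\ref{alfaalfa'}(a) for the two trivial containments is exactly what the paper leaves implicit.
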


\proof The theorem states that if $\bfeta \in  
\cap_{p\in C_{\bfalpha, \x} }{\Gamma_{\aaa,p} }$ 
then $C_{\bfalpha,\x} =C_{\bfeta,\x}$, 
hence the conclusion follows immediately.
\endproof

The meaning of this result is that under the assumption that 
$C_{\bfalpha,\x} =C_{\bfeta,\x}$ implies $\bfalpha = \bfeta$, the intersection 
of the Hough transforms of the pointed fibers $(C_{\bfalpha,\x}, p)$, 
when $p$ ranges through the points of $C_{\bfalpha,\x}$, turns out to 
be exactly $\{\bfalpha\}$, so that such intersection 
identifies the fiber $C_{\bfalpha,\x}$.
Therefore it is of great importance to detect situations where 
$\Phi_{|d_\sigma(\aaa)}$ is Hough $\sigma$-regular. 
For example, Hough regularity has been used as the conceptual 
basis for a pattern recognition algorithm (see~\cite{BMP}).
Hough $\sigma$-regularity does not hold in general as the 
following easy example shows.

\begin{example}\label{notregular}
Let $\aaa = a$, let $\x= x$,
and let $\mathcal{F} = \Spec(\C[a,x]/ F_{a,x})$,
where 
$$F_{a,x}= a^2x^2+x$$
Then $d(\aaa) = a$ (there is only one term ordering, 
so that we do not need to write~$d_\sigma(\aaa)$).
If we let $\alpha = 1$, the corresponding fiber 
in $\mathbb A_\C^1(x)$ is 
$C_{1,x}$ which is defined by the polynomial
$F_{0,x}=  x^2+x$. Therefore  $C_{1,x} = \{0,-1\}$
and $ \cap_{p\in C_{1,x} }{\Gamma_{a,p} }=
\Gamma_{a,0} \cap \Gamma_{a,-1}$
is defined by the polynomial~$F_a = a^2-1$.
We observe that  $\eta = -1$ is a zero of  $F_a$
and that~$C_{-1,x}=C_{1,x}$.
\end{example}

\goodbreak

\section{Detecting Hough-regularity}

After Corollary~\ref{alfa} the problem of finding Hough $\sigma$-regular families
rests on the ability of detecting families where ${C_{\bfalpha,\x} = C_{\bfeta,\x}}$
implies $\bfalpha=\bfeta$. The schematic meaning of ${C_{\bfalpha,\x} = C_{\bfeta,\x}}$
is $\sqrt{I(\bfalpha,\x)} = \sqrt{I(\bfeta,\x)}$, but we look for a more restricted condition, namely
we consider the following algebraic  problem.
When does the equality $I(\bfalpha,\x) = I(\bfeta,\x)$ imply $\bfalpha = \bfeta$?
To answer this question, we address a seemingly unrelated problem and prove some algebraic facts.
To do that, we make the following assumptions.

Let $K$ be an algebraically closed field and $m, s$ be two 
positive integers, let ${\aaa = (a_1, \dots, a_m)}$, $\yy = (y_1,\dots, y_s)$,
let $p_1(\aaa), \dots, p_s(\aaa), d_1(\aaa), \dots, d_s(\aaa)$ be polynomials in $K[\aaa]$, and let 
$d(\aaa) = {\rm lcm}(d_1(\aaa),\dots, d_s(\aaa) )$.
Then let $C$ be the affine  sub-scheme of~$\mathbb A^s_\yy$  given parametrically by 
$y_i = \frac{p_i(\aaa)}{d_i(\aaa)}$, let $D = \mathbb A^m_\yy \setminus \{d(\aaa) = 0\}$,
and denote the parametrization by $\mathcal{P}$.

\begin{definition}
With these assumptions, we say that $\mathcal{P}$ is injective if 
the corresponding morphism of schemes $D \To C$ is injective. In other words, 
$\mathcal{P}$ is injective if 
$\big( \frac{p_1(\bfalpha)}{d_1(\bfalpha)}, \dots, \frac{p_s(\bfalpha)}{d_s(\bfalpha)} \big)=
\big( \frac{p_1(\bfeta)}{d_1(\bfeta)}, \dots, \frac{p_s(\bfeta)}{d_s(\bfeta)} \big)$ 
implies $\bfalpha = \bfeta$. If this is the case,  $\mathcal{P}$ is a rational representation of $C$.
\end{definition}

\begin{definition}\label{idelta}
With the same assumptions, we double the set of 
indeterminates $\aaa = (a_1, \dots, a_m)$
by considering a new set of indeterminates $\eee = (e_1, \dots, e_m)$. 
Then we define two ideals in $K[\aaa, \eee]$, the ideal $I(Doub)$ generated by
$$\{p_1(\aaa)d_1(\eee) -  p_1(\eee)d_1(\aaa), \dots, 
p_s(\aaa)d_s(\eee) -  p_s(\eee)d_s(\aaa)\},$$
called the {\bf ideal of doubling coefficients of $\mathcal{P}$}, and
the ideal~$I(\Delta)$ generated by
$\{a_1-e_1, \dots, a_m-e_m\}$,
called the {\bf diagonal ideal}.
\end{definition}

The next theorem provides conditions under which a parametrization of the type described above is injective.

\begin{theorem}\label{injectivity}
Let $K$ be an algebraically closed field and $m,t$ be two 
positive integers, let ${\aaa = (a_1, \dots, a_m)}$, $\yy = (y_1,\dots, y_s)$,
let $p_1(\aaa), \dots, p_s(\aaa), d_1(\aaa), \dots, d_s(\aaa)$ be polynomials in $K[\aaa]$, and let 
$d(\aaa) = {\rm lcm}(d_1(\aaa),\dots, d_s(\aaa) )$.
Let $C$ be an affine rational sub-scheme of~$\mathbb A^s_\yy$ defined by the parametrization $\mathcal{P}$  given  by 
$y_i = \frac{p_i(\aaa)}{d_i(\aaa)}$, let $D = \mathbb A^m_\yy \setminus \{d = 0\}$,
and let $I(Doub)$ and $I(\Delta)$ be as described in Definition~\ref{idelta}.
Finally, let~$S(\Delta)$ be the saturation of~$I(Doub)$ with 
respect to~$I(\Delta)$.
Then the following conditions are equivalent.
\begin{itemize}
\item[(a)] The parametrization $\mathcal{P}$ is injective.
\item[(b)] The ideal $I(\Delta)$ is contained in the radical of the ideal $I(Doub)$.
\item[(c)] The ideal $I(\Delta)$ coincides with the radical of the ideal $I(Doub)$.
\item[(d)] We have $S(\Delta)=(1)$.
\end{itemize}
\end{theorem}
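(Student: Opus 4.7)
The plan is to dispose of the purely algebraic equivalences $(b)\!\iff\!(c)\!\iff\!(d)$ first, and then tackle the geometric equivalence $(a)\!\iff\!(b)$ via the Nullstellensatz. The pivot everywhere is the \emph{free inclusion} $I(Doub)\subseteq I(\Delta)$: substituting $\eee\mapsto\aaa$ in each generator $p_i(\aaa)d_i(\eee)-p_i(\eee)d_i(\aaa)$ yields $0$, so every generator lies in $I(\Delta)$. Moreover $K[\aaa,\eee]/I(\Delta)\cong K[\aaa]$ is a domain, so $I(\Delta)$ is prime and in particular radical; this upgrades the free inclusion to $\sqrt{I(Doub)}\subseteq I(\Delta)$.

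For $(b)\!\iff\!(c)$, the reverse inclusion $\sqrt{I(Doub)}\subseteq I(\Delta)$ being automatic, (b) and (c) assert the same thing from opposite sides. For $(c)\!\iff\!(d)$, unfolding the definition of the saturation shows that $S(\Delta)=(1)$ iff $I(\Delta)^k\subseteq I(Doub)$ for some $k\ge 1$. From (c), taking $g_1,\dots,g_r$ a finite generating set of $I(\Delta)$ and $n_i$ with $g_i^{n_i}\in I(Doub)$, a pigeonhole bound on exponents supplies the required $k$, giving (d). Conversely, taking radicals of $I(\Delta)^k\subseteq I(Doub)$ yields $I(\Delta)\subseteq\sqrt{I(Doub)}$, and combined with the free inclusion this is (c).

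For the geometric equivalence $(a)\!\iff\!(b)$, Hilbert's Nullstellensatz (valid since $K$ is algebraically closed) rewrites (b) as the set-theoretic containment $V(I(Doub))\subseteq V(I(\Delta))=\Delta$, the diagonal in $\mathbb A^{2m}_K$. On the dense open $D\times D$ the generators of $I(Doub)$ vanish at $(\bfalpha,\bfeta)$ exactly when $p_i(\bfalpha)d_i(\bfeta)=p_i(\bfeta)d_i(\bfalpha)$ for all $i$, which---since all $d_i(\bfalpha),d_i(\bfeta)$ are nonzero---is equivalent to $\mathcal P(\bfalpha)=\mathcal P(\bfeta)$. Therefore (a) is precisely the restriction of (b) to $D\times D$. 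The direction (b)$\Rightarrow$(a) is then immediate by restriction, and for (a)$\Rightarrow$(b) one argues that the Zariski-density of $D\times D$ forces any additional irreducible component of $V(I(Doub))$ outside $\Delta$ to meet $D\times D$, producing a pair $(\bfalpha,\bfeta)\in D\times D$ with $\bfalpha\ne\bfeta$ and $\mathcal P(\bfalpha)=\mathcal P(\bfeta)$, contradicting injectivity.

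The main obstacle I expect is exactly this last step: namely, ensuring that $V(I(Doub))$ carries no ``hidden'' irreducible component sitting entirely inside $V(d(\aaa))\cup V(d(\eee))$, which would be invisible to the injectivity test on $D$ yet block the global containment (b). The natural remedy, implicit when the parametrization $\mathcal P$ is given in reduced form (i.e., $\gcd(p_i,d_i)=1$ for each $i$), is that such stray components cannot occur, and the density argument then closes. Once that point is secured, the rest of the theorem is a formal juggling of Nullstellensatz, radicals, and ideal saturation.
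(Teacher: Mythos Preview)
Your treatment of the equivalences $(b)\Leftrightarrow(c)\Leftrightarrow(d)$ is correct and follows the same path as the paper (the paper dispatches $(b)\Leftrightarrow(d)$ in one word, while you spell out the pigeonhole argument, but the content is identical). Your identification of the free inclusion $I(Doub)\subseteq I(\Delta)$ and the primality of $I(\Delta)$ is exactly what the paper uses.

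For $(a)\Leftrightarrow(b)$ you correctly flag the real obstacle, but your proposed remedy is wrong. The reduced-form hypothesis $\gcd(p_i,d_i)=1$ does \emph{not} rule out hidden components of $V(I(Doub))$ inside $V(d(\aaa))\cup V(d(\eee))$. Take $m=s=2$, $p_1=a_1$, $d_1=a_2$, $p_2=a_2$, $d_2=1$; both pairs are coprime. The map $\mathcal P(\alpha_1,\alpha_2)=(\alpha_1/\alpha_2,\alpha_2)$ is injective on $D=\{a_2\neq 0\}$, so (a) holds. But $I(Doub)=(a_1e_2-e_1a_2,\,a_2-e_2)$ has zero set $\Delta\cup V(a_2,e_2)$, and the point $(1,0,0,0)$ shows $a_1-e_1\notin\sqrt{I(Doub)}$, so (b) fails. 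Thus even with all $\gcd(p_i,d_i)=1$ the implication $(a)\Rightarrow(b)$ breaks down, and your density argument cannot be rescued by that hypothesis.

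The way the paper actually closes this is not via a gcd condition but by working in the localization $K[\aaa,\eee]_{d(\aaa)d(\eee)}$ (equivalently, adjoining a Rabinowitsch variable $t$ and the relation $d(\aaa)d(\eee)t-1$). This is explicit in the statement of the subsequent Theorem~\ref{main}, in the \cocoa\ function {\tt IdealOfDoublingCoefficients}, and in the remark at the end of Example~\ref{spheric}. Under this reading the zero set of $I(Doub)$ lives entirely inside $D\times D$, the ``hidden component'' issue evaporates, and the paper's one-line claim that $(\bfalpha,\bfeta)$ is a zero of $I(Doub)$ iff $\mathcal P(\bfalpha)=\mathcal P(\bfeta)$ becomes literally true. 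Definition~\ref{idelta} as written omits the localization, which is a slip in the paper; but the intended ambient ring is the localized one, and with that understood your Nullstellensatz argument for $(a)\Leftrightarrow(b)$ goes through without any further obstacle.
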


\proof
Clearly (b) and (d) are equivalent.
We observe that $I(Doub)\subseteq I(\Delta)$ and that $ I(\Delta)$ is a prime ideal. 
Therefore if (b) is satisfied then we 
have the inclusions $I(Doub)\subseteq I(\Delta) \subseteq \sqrt{I(Doub)}$. 
Passing to the radicals we get the chain of inclusions
$ \sqrt{I(Doub)}\subseteq I(\Delta) \subseteq \sqrt{I(Doub)}$ 
which proves that (b) implies (c) while the implication
(c) $\implies$ (b) is obvious. 
It is clear that $(\alpha_1, \dots, \alpha_m)$ and $(\eta_1, \dots, \eta_m)$
 yield the same image point in $C$
 if and only if $(\alpha_1, \dots, \alpha_m, \eta_1, \dots, \eta_m)$ is a zero of the
 ideal~$I(Doub)$. Consequently, condition (a) is equivalent to the 
 zero set  of $I(Doub)$ being contained in the zero set of $I(\Delta)$. 
 Since $K$ is algebraically closed, the Nullstellensatz implies that 
 this condition is equivalent to (b),  and the proof is complete.
 \endproof

\bigskip

Now we are ready to use these facts to construct a method for detecting Hough-regularity.

\begin{definition}
Let $\sigma$ be a term ordering on $\mathbb T^n$ and 
let $H$ be a tuple of polynomials in $K(\aaa)[\x]$.
If they are listed with $\sigma$-increasing
leading terms, we get a well-defined list of non constant
coefficients which is denoted by  $NCC_H$
and called the {\bf non constant coefficient list} of $H$.
\end{definition}

For example, il $\sigma ={\tt DegLex}$ and 
$H= (x_1x_2- \frac{{a_2^3-1}_{\mathstrut}}{a_1-a_2^2}x_2, \ \ 
x_1^3- \frac{{a_1^2}_{\mathstrut}}{a_1-a_2}x_1x_2 +
 \frac{{a_2^3}_{\mathstrut}}{a_1})$, we have 
$NCC_H= [ - \frac{{a_2^3-1}_{\mathstrut}}{a_1-a_2^2} ,\ 
 - \frac{{a_1^2}_{\mathstrut}}{a_1-a_2}, \ 
\frac{{a_2^3}_{\mathstrut}}{a_1} ]$.
\medskip

In the following definition we use the terminology of Definition~\ref{restriction}.
\begin{definition}
Let $\sigma$
be a degree compatible term ordering on $\mathbb T^n$ 
and let $G$ be 
the reduced $\sigma$-Gr\"obner basis of $I(\aaa, \x)K(\aaa)[\x]$,  
listed with $\sigma$-increasing leading terms. 
Let $NCC_G=(\frac{p_1(\aaa)}{d_1(\aaa)}, \dots, \frac{p_s(\aaa)}{d_s(\aaa)})$
be the non constant coefficient list of~$G$ and 
let ${d_\sigma(\aaa) = {\rm lcm}(d_1(\aaa),\dots, d_s(\aaa))}$ 
be the $\sigma$-denom\-inator of $\Phi$. 
Let $e_1, \dots, e_m$ be~${m}$ new  indeterminates, 
let $\eee= (e_1, \dots, e_m)$, and 
consider the following two ideals in the localization
$K[\aaa,\eee]_{d_\sigma(\aaa)\cdot d_\sigma(\eee)}$. 
The first ideal is generated by the $s$ polynomials
$$\{p_1(\aaa)d_1(\eee) -  p_1(\eee)d_1(\aaa), \dots, 
p_s(\aaa)d_s(\eee) -  p_s(\eee)d_s(\aaa)\},$$
is denoted by $I(DC_G)$, and called the 
{\bf ideal of doubling coefficients} of $G$.
The second ideal is generated by the $m$ polynomials
$\{a_1-e_1, \dots, a_m-e_m\}$,
is denoted by~$I(\Delta)$ and called the {\bf diagonal ideal}.

\end{definition}

\begin{theorem}\label{main}{\bf (Hough $\sigma$-regularity)}\\
Let $K$ be algebraically closed, let $\Phi: \mathcal{F} \To \mathbb A^m_K$ 
be a dominant family of  
sub-schemes of~$\mathbb A^n_K$ parametrized by~$\mathbb A^m_K$,
let $\sigma$
be a degree compatible term ordering on $\mathbb T^n$ and let~$G$ be 
the reduced $\sigma$-Gr\"obner basis of $I(\aaa, \x)K(\aaa)[\x]$ 
listed with $\sigma$-increasing leading terms. Then
let $d_\sigma(\aaa)$ be the $\sigma$-denominator of $\Phi$, let $I(DC_G)$ 
be the ideal of doubling coefficients of $G$, let~$I(\Delta)$
be the diagonal ideal, and let~$S(\Delta)$ be the saturation of~$I(DC_G)$ with 
respect to~$I(\Delta)$.
Then the following conditions are equivalent.
\begin{itemize}
\item[(a)] The morphism $\Phi_{|d_\sigma(\aaa)}$ is Hough $\sigma$-regular.
\item[(b)] The ideal $I(\Delta)$ is contained in the radical of the ideal $I(DC_G)$.
\item[(c)] The ideal $I(\Delta)$ coincides with the radical of the ideal $I(DC_G)$.
\item[(d)] We have $S(\Delta)=(1)$.
\end{itemize}
\end{theorem}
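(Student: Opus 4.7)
The plan is to identify Hough $\sigma$-regularity with the injectivity of the rational parametrization determined by the non-constant coefficient list $NCC_G$, and then to invoke Theorem~\ref{injectivity}.

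First I would apply Corollary~\ref{alfa} to rephrase condition~(a) as the implication: for every pair $\bfalpha,\bfeta\in\mathcal{U}_\sigma$, the equality $C_{\bfalpha,\x}=C_{\bfeta,\x}$ forces $\bfalpha=\bfeta$. This replaces the intersection-theoretic formulation of Hough $\sigma$-regularity with a clean uniqueness-of-fibers statement.

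Next comes the key bridge, translating fiber equality into equality of coefficient vectors. Since $\sigma$ is degree-compatible and we are working on the $\sigma$-flat restriction, the reduced $\sigma$-Gr\"obner basis $G(\aaa,\x)$ of $I(\aaa,\x)K(\aaa)[\x]$ specializes at every $\bfalpha\in\mathcal{U}_\sigma$ to the reduced $\sigma$-Gr\"obner basis of $I(\bfalpha,\x)$, as already used in the proof of Theorem~\ref{cequalc}. Because $C_{\bfalpha,\x}$ is the subscheme of $\mathbb A^n_K$ cut out by $I(\bfalpha,\x)$, we have $C_{\bfalpha,\x}=C_{\bfeta,\x}$ if and only if $I(\bfalpha,\x)=I(\bfeta,\x)$. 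By uniqueness of reduced Gr\"obner bases for a fixed term ordering, this is equivalent to $G(\bfalpha,\x)=G(\bfeta,\x)$; since the leading monomials and the $\aaa$-independent coefficients are common to both specializations, the remaining data is exactly $NCC_G$, so the equality reduces to $NCC_G(\bfalpha)=NCC_G(\bfeta)$.

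Combining these two steps, condition~(a) is equivalent to the injectivity on $D=\mathcal{U}_\sigma$ of the parametrization $\mathcal{P}\colon \bfalpha\mapsto \bigl(p_1(\bfalpha)/d_1(\bfalpha),\ldots,p_s(\bfalpha)/d_s(\bfalpha)\bigr)$ in the sense of the hypotheses of Theorem~\ref{injectivity}. Observing that $I(DC_G)$ coincides with the ideal $I(Doub)$ built from these same rational functions, and that $I(\Delta)$ and $S(\Delta)$ carry the same meaning in both theorems---the localization at $d_\sigma(\aaa)d_\sigma(\eee)$ being harmless for testing saturations or radical containments---a direct application of Theorem~\ref{injectivity} yields the equivalences (a)$\Leftrightarrow$(b)$\Leftrightarrow$(c)$\Leftrightarrow$(d).

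The main obstacle is the bridge in the second paragraph: one needs the equality of fibers to give equality of \emph{ideals}, not merely of radicals, so that reduced Gr\"obner bases are the right objects to compare. This is delivered precisely by the specialization property on the flat locus together with the uniqueness of the reduced Gr\"obner basis, so a careful accounting of what lives in $K(\aaa)[\x]$ versus $K[\x]$ on $\mathcal{U}_\sigma$ is where the proof will have to be tight.
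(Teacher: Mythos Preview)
Your proposal is correct and follows essentially the same route as the paper: both invoke Corollary~\ref{alfa} to recast Hough $\sigma$-regularity as the implication $I(\bfalpha,\x)=I(\bfeta,\x)\Rightarrow\bfalpha=\bfeta$, then use specialization of the reduced Gr\"obner basis on $\mathcal{U}_\sigma$ to convert ideal equality into equality of $NCC_G$, and finally apply Theorem~\ref{injectivity} to obtain the equivalences with (b), (c), (d). Your closing remark about the ideals-versus-radicals subtlety and the passage to the localized ring is, if anything, slightly more explicit than the paper's own argument.
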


\proof
The equivalence of (b), (c), (d) follows as a special 
case of Theorem~\ref{injectivity}.
After Corollary~\ref{alfa} we observed that 
condition (a) can be expressed by saying that 
the equality $I(\bfalpha,\x) = I(\bfeta,\x)$ implies $\bfalpha = \bfeta$.
On the other hand, to say that $I(\bfalpha,\x) = I(\bfeta,\x)$ 
is equivalent to saying that
the reduced Gr\"obner bases of $I(\bfalpha,\x)$ 
and $I(\bfeta,\x)$ are identical. 
As we have already observed in the proof of 
Theorem~\ref{cequalc}, outside the hypersurface 
defined by $d_\sigma(\aaa) = 0$ the reduced \hbox{$\sigma$-Gr\"obner}
basis of~$I(\aaa, \x)K(\aaa)[\x]$ specializes to the reduced Gr\"obner bases
of the fibers. Therefore the equality of the defining ideals of two fibers
is equivalent to the equality of the corresponding coefficients in the 
 reduced Gr\"obner bases. Hence $(\alpha_1, \dots, \alpha_m)$
 yields the same reduced Gr\"obner basis as $(\eta_1, \dots, \eta_m)$
 if and only if $(\alpha_1, \dots, \alpha_m, \eta_1, \dots, \eta_m)$ is a zero of the
 ideal~$I(DC_G)$. Consequently, condition (a) is equivalent to the 
 zero set  of $I(DC_G)$ being contained in the zero set of $I(\Delta)$. 
Again the conclusion follows directly from Thereom~\ref{injectivity}.
\endproof

\begin{remark}\label{Hypersurfaces}{\bf (Hypersurfaces)}\\
Clearly, if the ideal $I(\aaa,\x)$ is principal all the matter is simplified.
Let $F=F(\aaa,\x)$ be a generator of $I(\aaa,\x)$.
In the case that all the coefficients of the leading 
form of~$F$ contain parameters, we have to choose $\sigma$ and
then invert the leading term of~$F$ to produce a monic polynomial 
$\overline{F}$. Then $\{\overline{F}\}$ is the reduced $\sigma$-Gr\"obner 
basis of $I(\aaa, \x)K(\aaa)[\x]$ and we can use the above theorem.
On the other hand, if one of the coefficients of  the leading 
form of~$F$ is constant, then the family is flat over the parameter 
space~$\mathbb A^m_K$. In this case we do not need 
to invert anything, hence if $\{p_1(\aaa),\dots, p_s(\aaa)\}$ is the 
list of non constant coefficients, we may consider the ideal generated
by $\{p_1(\aaa)-p_1(\eee), \dots, p_s(\aaa)-p_s(\eee)\}$
as the ideal of doubling coefficients, and then use the  theorem.
\end{remark}

Theorem~\ref{main}  yields a nice criterion to detect Hough $\sigma$-regularity. 
It depends on the term ordering chosen, hence it refers to a specific open
sub-scheme of $\mathbb A^m_K$. 
However, it turns out to be of particular importance the detection of cases where
the regularity is achieved generically, i.e.\ over a possibly different Zariski-open 
subschemes of the parameter space~$\mathbb A^m_K$.

\begin{definition}\label{genericHoughregular}
Let  $\Phi: \mathcal{F} \To \mathbb A^m_K$ be a dominant 
 family of sub-schemes of $\mathbb A^n_K$ parametrized 
 by $\mathbb A^m_K$ and let $\bfalpha$, $p$, 
 $C_{\bfalpha,\x}$, $\Gamma_{\aaa, p}$ 
be as described at the beginning of the section.
\begin{itemize}
\item[(a)] We say that the morphism $\Phi$ is {\bf generically Hough regular}
if there exists a non-empty open subscheme $\mathcal{U}$
of~$ \mathbb A^m_K$ such that 
$\cap_{p\in C_{\bfalpha, \x} }{\Gamma_{\aaa,p} } =
\{\bfalpha\}$ for all~$\bfalpha \in \mathcal{U}$. 
In this case we can say that $\Phi$ is Hough $\mathcal{U}$-regular.

\item[(b)] We say that the morphism $\Phi$ is {\bf Hough regular}
if it is generically Hough regular and $\mathcal{U}=\mathbb A^m_K$. 
\end{itemize}
\end{definition}

\begin{corollary}\label{houghextension}
With the same assumptions as in  Theorem~\ref{main}, we let 
$0\ne h(\aaa)$ be an element of  $K[\aaa]$,
let $\ddd = d_\sigma(\aaa)\cdot h(\aaa)\cdot d_\sigma(\eee)\cdot h(\eee)$,
and consider $I(\Delta)$, $I(DC_G)$, $S(\Delta)$ as ideals of $K[\aaa,\eee]_\ddd$.
Then the following conditions are equivalent.
\begin{itemize}
\item[(a)] The morhism $\Phi$ is Hough $\mathcal{U}$-regular 
where $\mathcal{U} =\mathbb A^m_K \setminus \{d_\sigma(\aaa)\cdot h(\aaa)=0\}$.
\item[(b)] The ideal $I(\Delta)$ is contained in the radical of the ideal $I(DC_G)$.
\item[(c)] The ideal $I(\Delta)$ coincides with the radical of the ideal $I(DC_G)$.
\item[(d)] We have $S(\Delta)=(1)$.
\end{itemize}
\end{corollary}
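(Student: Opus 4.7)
The plan is to mimic the proof of Theorem~\ref{main}, now working inside the larger localization $K[\aaa,\eee]_\ddd$; inverting the extra factor $h(\aaa)\,h(\eee)$ simply restricts attention from $\mathcal{U}_\sigma\times\mathcal{U}_\sigma$ to the affine open subscheme $\mathcal{U}\times\mathcal{U}$ of $\mathbb A^{2m}_K$. Since $d_\sigma(\aaa)$ divides $\ddd$, Proposition~\ref{flatness} still applies on $\mathcal{U}$: for every $\bfalpha\in\mathcal{U}$ the reduced $\sigma$-Gr\"obner basis $G(\aaa,\x)$ specializes to the reduced $\sigma$-Gr\"obner basis of $I(\bfalpha,\x)$.

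First I would establish the equivalences (b) $\Leftrightarrow$ (c) $\Leftrightarrow$ (d) exactly as in Theorem~\ref{injectivity}. Each generator of $I(DC_G)$ vanishes modulo $I(\Delta)$, so $I(DC_G)\subseteq I(\Delta)$; since $I(\Delta)$ is prime, condition (b) sandwiches the radicals to force $\sqrt{I(DC_G)}=I(\Delta)$, which is (c), while (c) $\Rightarrow$ (b) is trivial. The equivalence (b) $\Leftrightarrow$ (d) is the standard saturation identity: $S(\Delta)=(1)$ exactly when every prime of $K[\aaa,\eee]_\ddd$ containing $I(DC_G)$ also contains $I(\Delta)$, i.e.\ when $I(\Delta)\subseteq\sqrt{I(DC_G)}$.

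Next I would handle (a) $\Leftrightarrow$ (b). By Corollary~\ref{alfa}, Hough $\mathcal{U}$-regularity is equivalent to the implication $I(\bfalpha,\x)=I(\bfeta,\x)\Rightarrow\bfalpha=\bfeta$ for $\bfalpha,\bfeta\in\mathcal{U}$. Because $G(\aaa,\x)$ specializes coefficient-by-coefficient on $\mathcal{U}$, two fibers share the same ideal exactly when their non-constant coefficient lists coincide, which is the condition that $(\bfalpha,\bfeta)$ be a common zero of the generators of $I(DC_G)$. Thus (a) is equivalent to the geometric statement that the zero locus of $I(DC_G)$ inside $\mathcal{U}\times\mathcal{U}$ is contained in the zero locus of $I(\Delta)$. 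The Hilbert Nullstellensatz, applied to the finitely generated $K$-algebra $K[\aaa,\eee]_\ddd$ (whose maximal ideals correspond exactly to the closed points of $\mathcal{U}\times\mathcal{U}$), converts this geometric inclusion into the ideal-theoretic statement (b).

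The main subtlety will be verifying that Corollary~\ref{alfa} continues to apply after shrinking the base from $\mathcal{U}_\sigma$ to $\mathcal{U}$; but since $\mathcal{U}\subseteq\mathcal{U}_\sigma$, every hypothesis in play (flatness of $\Phi_{|d_\sigma(\aaa)}$ over $\mathcal{U}$, algebraic closure of $K$, irreducibility of the relevant fibers, degree-compatibility of $\sigma$) is inherited without change, so beyond bookkeeping no new ingredient is required. The whole argument is therefore a verbatim re-run of the proof of Theorem~\ref{main} with $d_\sigma(\aaa)\cdot d_\sigma(\eee)$ replaced throughout by $\ddd$.
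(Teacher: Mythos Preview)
Your proposal is correct and follows essentially the same approach as the paper. The paper's own proof is even terser: it simply observes that the reduced $\sigma$-Gr\"obner basis specializes over the complement of $\{d_\sigma(\aaa)=0\}$, and hence the equivalences of Theorem~\ref{main} carry over verbatim to the smaller open set $\mathcal{U}=\mathbb A^m_K\setminus\{d_\sigma(\aaa)\cdot h(\aaa)=0\}$ and the corresponding localization $K[\aaa,\eee]_\ddd$ --- exactly the ``verbatim re-run'' you describe.
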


\proof
We observed  that outside the hypersurface 
of $\mathbb A^n_K$ defined by
the vanishing of the $\sigma$-denominator $d_\sigma(\aaa)$ of $\Phi$,
the reduced~$\sigma$-Gr\"obner basis of~$I(\aaa, \x)K(\aaa)[\x]$ 
specializes to the reduced $\sigma$-Gr\"obner bases of the fibers
(see the proof of  Theorem~\ref{cequalc}). Consequently, 
the equivalences in Theorem~\ref{main} 
extend, with the same proof, to the complement of every hypersurface of 
type $d_\sigma(\aaa)\cdot h(\aaa)$ and the 
corresponding localization $K[\aaa]_\ddd$.
\endproof

By comparing this definition with Definition~\ref{sigmaHoughregular}, 
we observe that the notion of Hough $\sigma$-regularity is a 
special instance of the notion of generic Hough regularity. 
Example~\ref{weakViviani} in the next section illustrates this relation.
Now the question is: how can we detect  generic Hough regularity?
Even more, if we discover generic Hough regularity, can we 
find $\mathcal U$ explicitly? The next theorem provides 
an answer to both questions.

\begin{theorem}\label{genericH}{\bf (Generic Hough regularity)}\\
With the same assumptions as in~Theorem~\ref{main}, suppose that 
the morphism $\Phi$ is not Hough $\sigma$-regular
and that the ideal  $S(\Delta)\cap K[\aaa]$ is different from zero. 
Then we have the following facts.

\begin{itemize}
\item[(a)] The morphism $\Phi$ 
is generically Hough regular.
\item[(b)] If $h(\aaa)$ is a non-zero element of  $S(\Delta)\cap K[\aaa]$
and $\mathcal{U} = 
\mathbb A^m_K\setminus\{d_\sigma(\aaa)\cdot h(\aaa)=0\}$ 
then $\Phi$ is Hough~$\mathcal{U}$-regular.
\end{itemize}

\end{theorem}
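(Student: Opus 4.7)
The plan is to derive both parts from Corollary~\ref{houghextension}, reading the hypothesis on $S(\Delta)\cap K[\aaa]$ as handing us, for free, a specific hypersurface whose complement is the desired open $\mathcal{U}$. Statement (a) will follow from (b) and the observation that $d_\sigma(\aaa)\cdot h(\aaa)$ is a nonzero polynomial on the irreducible $\mathbb A^m_K$, so its non-vanishing locus is a nonempty Zariski-open subscheme; thus the real content is (b).

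For (b), fix a nonzero $h(\aaa) \in S(\Delta)\cap K[\aaa]$ and set $\ddd = d_\sigma(\aaa)\cdot h(\aaa)\cdot d_\sigma(\eee)\cdot h(\eee)$, as in the corollary. By Corollary~\ref{houghextension} it suffices to verify condition~(b) (equivalently (d)) of that statement for the ideals $I(DC_G)$, $I(\Delta)$, $S(\Delta)$ regarded in $K[\aaa,\eee]_\ddd$. The starting saturation in Theorem~\ref{main} is computed in the smaller localization $K[\aaa,\eee]_{d_\sigma(\aaa)\cdot d_\sigma(\eee)}$. Since $h(\aaa)$ lies in that saturation, there exists an integer $N\ge 1$ such that $h(\aaa)\cdot I(\Delta)^N \subseteq I(DC_G)$ in $K[\aaa,\eee]_{d_\sigma(\aaa)\cdot d_\sigma(\eee)}$. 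Extending scalars along the flat localization map $K[\aaa,\eee]_{d_\sigma(\aaa)\cdot d_\sigma(\eee)} \to K[\aaa,\eee]_\ddd$, the inclusion persists; but now $h(\aaa)$ is a unit, so we may cancel it on the left to obtain $I(\Delta)^N \subseteq I(DC_G)$ in $K[\aaa,\eee]_\ddd$. This is exactly $I(\Delta)\subseteq\sqrt{I(DC_G)}$, which is condition (b) of Corollary~\ref{houghextension}. Hence $\Phi$ is Hough $\mathcal{U}$-regular for $\mathcal{U}=\mathbb A^m_K\setminus\{d_\sigma(\aaa)\cdot h(\aaa)=0\}$, proving (b).

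For (a), the hypothesis $S(\Delta)\cap K[\aaa]\ne(0)$ produces some nonzero $h(\aaa)$ as above; the open set $\mathcal{U}$ of part (b) is nonempty and $\Phi$ is Hough $\mathcal{U}$-regular there, which by Definition~\ref{genericHoughregular}(a) is precisely generic Hough regularity.

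There is no substantive obstacle; the only point demanding a moment of care is the interplay between saturation and localization. One can either appeal to the general fact that, for a flat localization, ideal saturations commute with extension (so the saturation $S(\Delta)$ of Corollary~\ref{houghextension} is the extension of the one defined in Theorem~\ref{main}), or avoid this formality entirely by the concrete denominator-clearing argument given above: an explicit containment $h(\aaa)\cdot I(\Delta)^N\subseteq I(DC_G)$ becomes $I(\Delta)^N\subseteq I(DC_G)$ once $h(\aaa)$ has been inverted, bypassing any need to invoke general properties of saturation.
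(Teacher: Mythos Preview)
Your proof is correct and follows the same overall strategy as the paper: invoke Corollary~\ref{houghextension} after localizing at $\ddd = d_\sigma(\aaa)\cdot h(\aaa)\cdot d_\sigma(\eee)\cdot h(\eee)$, using that $h(\aaa)$ becomes a unit. The only difference is in which equivalent condition of the corollary is verified. You check condition~(b) directly, arguing from $h(\aaa)\cdot I(\Delta)^N\subseteq I(DC_G)$ that $I(\Delta)\subseteq\sqrt{I(DC_G)}$ once $h(\aaa)$ is inverted. The paper instead checks condition~(d): it first observes that $I(DC_G)$ and $I(\Delta)$ are invariant under the involution swapping $\aaa$ and $\eee$, so $S(\Delta)$ is too, hence $h(\eee)\in S(\Delta)$ as well; then $S(\Delta)$ contains the unit $h(\aaa)$ (or $h(\eee)$) in $K[\aaa,\eee]_\ddd$, giving $S(\Delta)=(1)$. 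Your route is slightly more elementary in that it bypasses the symmetry observation entirely; the paper's route makes the structural reason transparent and fits the phrasing of condition~(d) more directly. Either way the substance is the same.
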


\proof 
Let $h(\aaa)$ be a non-zero element of $S(\Delta)\cap K[\aaa]$. 
We observe that $I(DC_G)$ and $I(\Delta)$ are invariant under the action of 
switching $a_1, \dots, a_m$  with  $e_1, \dots, e_m$. Therefore $S(\Delta)$ 
enjoys the same property which implies that $h(\eee)\in S(\Delta)$.
Consequently, if we let $\ddd = d_\sigma(\aaa)\cdot h(\aaa)
\cdot d_\sigma(\eee)\cdot h(\eee)$ we deduce 
that $S(\Delta)K[\aaa,\eee]_\ddd = (1)$ 
Therefore the two claims of the theorem follow 
directly from Corollary~\ref{houghextension}.
\endproof

{\bf Question}: Is it true that condition (a) implies that 
$S(\Delta)\cap K[\aaa]$ is different from zero?

\goodbreak

\bigskip\bigskip
\section{Examples and Code}
\label{transform}

The computation in the following examples uses \cocoa-5.
Here we see the basic functions which were written by Anna Bigatti.
\bigskip

\begin{verbatim}
Define NonConstCoefficients(X)
 If Type(X) = RINGELEM Then
    Return [C In Coefficients(X) | deg(den(C))<>0 Or deg(num(C))<>0];
 Elif Type(X) = LIST Then
    Return flatten([NonConstCoefficients(F) | F In X], 1);
 EndIf;
    Error("Unknown type");
EndDefine; -- NonConstCoefficients
\end{verbatim}

\medskip

\begin{verbatim}
Define IdealOfDoublingCoefficients(S, L, a_Name, e_Name, t_Name)
If L = [] Then Error("list is empty"); EndIf;
R := RingOf(num(L[1]));
phia := PolyAlgebraHom(R, S, IndetsCalled(a_Name, S));
phie := PolyAlgebraHom(R, S, IndetsCalled(e_Name, S));
DC := [phia(num(F))*phie(den(F))-phie(num(F))*phia(den(F)) | F In L];
LCM_DEN := LCM([den(F) | F In L]);
Inv := phia(LCM_DEN) * phie(LCM_DEN) * RingElem(S,t_Name) -1;
    Return Ideal(DC) + Ideal(Inv);
EndDefine; -- IdealOfDoublingCoeficients
\end{verbatim}

\goodbreak
\begin{example}\label{spaceline}{\bf (A space line)}\\
Let $\Phi:\mathcal{F}\To \mathbb A_\mathbb C^4$ be defined by the ideal
$I(\aaa,\x) = (x-a_1y-a_2z,\ x-a_3y-a_4z)$. If $\sigma = {\tt DegRevLex}$, the 
reduced  $\sigma$-Gr\"obner basis of $I(\aaa,\x)\mathbb C(\aaa)[\x]$ is 
$$G = \big(\, y +\frac{a_2-a_4}{a_1-a_3}z,\ \  
x +\frac{a_2a_3 -a_1a_4}{a_1-a_3}z\, \big) \hbox{ \ hence\  \ }
NCC_G= \big[\, \frac{a_2-a_4}{a_1-a_3},\  
\frac{a_2a_3 -a_1a_4}{a_1-a_3}\big]$$
Consequently $I(DC_G)$ is generated by the set
$ \big\{ 
(a_2-a_4)(e_1-e_3)\,-\,(e_2-e_4)(a_1-a_3),\\ (a_2a_3 -a_1a_4)(e_1-e_3)-
(e_2e_3 -e_1e_4)(a_1-a_3),\  (a_1-a_3)(e_1-e_3)\,t-1
 \big\}$. The ideal~$I(\Delta)$ is generated 
 by $\{a_1-e_1,\ a_2-e_2,\ a_3-e_3,\ a_4-e_4\}$.
 We ask \cocoa-5 to check if  $I(\Delta)$ is contained in the 
 radical of  $I(DC_G)$  and the answer is negative.
\medskip
Here we see the \cocoa-code.
\begin{verbatim}
N := 4;  R ::= QQ[a[1..N]];
S ::= QQ[a[1..N], e[1..N], t];
K := NewFractionField(R);
Use P ::= K[x,y,z];
I := Ideal(x-a[1]*y-a[2]*z,  x-a[3]*y-a[4]*z);
RGB := ReducedGBasis(I);
NCC := NonConstCoefficients(RGB);
Use S;
IDelta := ideal([a[i]-e[i] | i In 1..N]);
IDC := IdealOfDoublingCoefficients(S, NCC, "a", "e", "t");
IsInRadical(IDelta, IDC);
--false
\end{verbatim}
We conclude that the morphism $\Phi$ is not Hough $\sigma$-regular.
A bit of further easy investigation shows that indeed we
get the same line for instance if we assign the values  $(0,1,2,3)$ 
and $(0,1,1,2)$ to $(a_1,a_2, a_3,a_4)$.
\end{example}

\begin{example}\label{canspaceline}{\bf (A canonical space line)}\\
A completetly different situation happens when the space line is presented 
in canonical form. Let $\Phi:\mathcal{F}\To \mathbb A_\mathbb C^4$ be defined by
$I(\aaa,\x) = (x-a_1z-a_2,\ y-a_3z-a_4)$. If $\sigma = {\tt DegRevLex}$, the 
reduced  $\sigma$-Gr\"obner basis of $I(\aaa,\x)\mathbb C(\aaa)[\x]$ is clearly
$$G = \big(x-a_1z-a_2,\ y-a_3z-a_4\big)$$
The trivial conclusion is that the morphism $\Phi$ is Hough $\sigma$-regular.
Since there are no denominators, $\Phi$ is Hough regular.
\end{example}

\begin{example}\label{conic}{\bf (First conic)}\\
Let $\Phi:\mathcal{F}\To \mathbb A_\mathbb C^1$ be 
defined by the ideal
$I(a,\x) = (x^2-a^2y-a^3)$. If ${\sigma = {\tt DegRevLex}}$, the 
reduced  $\sigma$-Gr\"obner basis of $I(\aaa,\x)\mathbb C(a)[\x]$ is 
$G= (x^2-a^2y-a^3)$. There is no denominator, 
hence $\mathcal{U} = \mathbb A_\mathbb C^1$. Therefore
$NCC_G= \big[ a^2,  a^3\big]$,
and we do not need to invert anything, so that 
$I(DC_G)= (a^2-e^2, a^3-e^3)$.
We have~$I(\Delta) = (a-e)$, we ask \cocoa-5 to check 
if  $I(\Delta)$ is contained in the 
 radical of~$I(DC_G)$ and the answer is positive.
Here we see the \cocoa-code.

\bigskip

\begin{verbatim}
R ::=  QQ[a];
S ::= QQ[a,e,t];
K := NewFractionField(R);
Use P ::= K[x,y];
I := Ideal(x^2-a^2*y-a^3);
RGB := ReducedGBasis(I);
NCC := NonConstCoefficients(RGB);
Use S;
IDelta := ideal(a-e);
IDC := IdealOfDoublingCoefficients(S, NCC, "a", "e", "t");
IsInRadical(IDelta, IDC);
--true
\end{verbatim}
\end{example}
Therefore  the morphism $\Phi$ is Hough regular.

\begin{example}\label{secondconic}{\bf (Second conic)}\\
Let $\Phi:\mathcal{F}\To \mathbb A_\mathbb C^1$ be 
defined by the ideal
$I(a,\x) = (x^2-a^2y-a^4)$. If ${\sigma = {\tt DegRevLex}}$, the 
reduced  $\sigma$-Gr\"obner basis of $I(\aaa,\x)\mathbb C(a)[\x]$ is 
$G= (x^2-a^2y-a^4)$. There is no denominator, 
hence $\mathcal{U} = \mathbb A_\mathbb C^1$. Therefore
$NCC_G= \big[ a^2,  a^4\big]$,
and we do not need to invert anything, so that 
$I(DC_G)= (a^2-e^2, a^4-e^4)$.
We have~$I(\Delta) = (a-e)$, we ask \cocoa-5 to check 
if  $I(\Delta)$ is contained in the 
 radical of~$I(DC_G)$ and the answer is negative.
Here we see the \cocoa-code.
\begin{verbatim}
R ::=  QQ[a];
S ::= QQ[a,e, t];
K := NewFractionField(R);
Use P ::= K[x,y];
I := Ideal(x^2-a^2*y-a^4);
RGB := ReducedGBasis(I);
NCC := NonConstCoefficients(RGB);
Use S;
IDelta := ideal(a-e);
IDC := IdealOfDoublingCoefficients(S, NCC, "a", "e", "t");
IsInRadical(IDelta, IDC);
--false
\end{verbatim}
\end{example}
Therefore the morphism $\Phi$ is not Hough $\sigma$-regular.
Clearly we get the same conic for $a =1$ and $a=-1$.
\bigskip

The next example shows that the ideal $I(\aaa,\x)$ 
requires $d(\aaa)d(\eee) t-1$ among its generators.

\begin{example}\label{spheric}{\bf (A quartic curve)}\\
Let $\Phi:\mathcal{F}\To \mathbb A_\mathbb C^2$ be defined by 
$I(a,\x) = (x^2+y^2+z^2-1,\ a_1xy-a_2y^2-z)$. If ${\sigma = {\tt DegRevLex}}$, 
the reduced  $\sigma$-Gr\"obner basis of $I(\aaa,\x)\mathbb C(a)[\x]$ is 
$$G = ( xy -\frac{a_2}{a_1}y^2 -\frac{1}{a_1}z, \  \ 
x^2 +y^2 +z^2 -1, \  \ 
y^3+\frac{a_1^2}{h}yz^2 +
\frac{a_1}{h}xz +
\frac{a_2}{h}yz -
\frac{a_1^2}{h}y
)$$
where $h = a_1^2+a_2^2$.
Then 
$\mathcal{U} = \mathbb A_\mathbb C^2\setminus\{a_1h=0\}$.
We ask \cocoa-5 to check 
if  $I(\Delta)$ is contained in the 
 radical of  $I(DC_G))$ and the answer is positive.
Here we see the \cocoa-code.
\begin{verbatim}
N := 2;  R ::= QQ[a[1..N]];
S ::= QQ[a[1..N], e[1..N], t];
K := NewFractionField(R);
Use P ::= K[x,y,z];
I := Ideal(x^2+y^2+z^2-1, a[1]*x*y-a[2]*y^2-z);
RGB := ReducedGBasis(I);
NCC := NonConstCoefficients(RGB);
Use S;
IDelta := ideal([a[i]-e[i] | i In 1..N]);
IDC := IdealOfDoublingCoefficients(S, NCC, "a", "e", "t");
IsInRadical(IDelta, IDC);
--true
\end{verbatim}
Therefore the morphism $\Phi$ is Hough $\sigma$-regular.
If we had removed the last generator of $I(DC_G)$, i.e.\ the 
generator which imposes the invertibility of $d(\aaa)$ and $d(\eee)$,
the answer would have been {\tt false}.

\end{example}

\begin{example}\label{weakViviani}{\bf (A family of Viviani curves)}\\
Let $\Phi:\mathcal{F}\To \mathbb A_\C^2$ be defined by 
$I(\aaa,\x) = (a_2(z-a_1)^2+y^2-a_2a_1^2,\ x^2+y^2+z^2-4a_1^2)$.
Classical Vivian curves are obtained for $a_2=1$ (see~\cite{S}, p. 461).
 If ${\sigma = {\tt DegRevLex}}$, 
the reduced  $\sigma$-Gr\"obner basis of $I(\aaa,\x)\C(\aaa)[\x]$ is 
$$
G = (y^2+a_2z^2-2a_1a_2z,\  \ x^2+(1-a_2)z^2+2a_1a_2z-4a_1^2)
$$
There are no denominators in the coefficients, hence the family 
is flat all over $\mathbb A^2_K$. However the family is not Hough regular,
as we check with \cocoa.

Here we see the \cocoa-code.
\begin{verbatim}
N := 2;  R ::= QQ[a[1..N]];
S ::= QQ[t, e[1..N], a[1..N]];
K := NewFractionField(R);
Use P ::= K[x,y,z,t];
ID:=Ideal(a[2]*(z-a[1])^2 + y^2 - a[2]*a[1]^2,  
x^2 + y^2 + z^2 - 4*a[1]^2);
RGB := ReducedGBasis(ID);
NCC := NonConstCoefficients(RGB);
Use S;
IDelta := ideal([a[i]-e[i] | i In 1..N]);
IDC := IdealOfDoublingCoefficients(S, NCC, "a", "e", "t");
IsInRadical(IDelta, IDC);
--false
\end{verbatim}
At this point we compute the saturation of $I(DC)_G$ with respect to $I(\Delta)$
and its intersection with $\C[a_1, a_2]$.

\begin{verbatim}
Sat:=Saturation(IDC,IDelta);
-- ideal(a[2], e[2], e[1] +a[1], t -1)
Elim([e[1],e[2],t], Sat);
-- ideal(a[2])
\end{verbatim}

We get the ideal generated by $a_2$ which means 
that $\Phi$ is Hough $\mathcal{U}$-regular 
where~$\mathcal U = \mathbb A^2_\C\setminus \{a_2=0 \}$
(see Theorem~\ref{genericH}).
\end{example}

\bigskip
\goodbreak

\begin{example}\label{monomial}{\bf (A monomial curve)}\\
Let $\Phi:\mathcal{F}\To \mathbb A_K^2$ be defined parametrically by
$$x_1=a_1u^3,\ \  x_2=a_2u^4,\ \ x_3=u^5$$
By eliminating $u$ we get generators of the ideal $I(\aaa,\x)$.
If ${\sigma = {\tt DegRevLex}}$, 
the reduced  $\sigma$-Gr\"obner basis of $I(\aaa,\x)K(\aaa)[\x]$ is 
$$
G = (x_2^2-\frac{a_2^2}{a_1}\,x_1x_3,\quad  x_1^2x_2-a_1^2a_2x_3^2,\quad 
x_1^3-\frac{a_1^3}{a_2}\,x_2x_3)
$$
The family is not Hough-regular as the following \cocoa-code shows.

Here we see the \cocoa-code.
\begin{verbatim}
N := 2;  R ::= QQ[a[1..N]];
S ::= QQ[t, a[1..N], e[1..N]];
K := NewFractionField(R);
Use P ::= K[x[1..3],u];
L:=[3,4,5];
ID:=Ideal( x[1]-a[1]*u^(L[1]),   x[2]-a[2]*u^(L[2]),  x[3]-u^(L[3]));
E:=Elim([u], ID);
RGB := ReducedGBasis(E);
NCC := NonConstCoefficients(RGB);
Use S;
IDelta := ideal([a[i]-e[i] | i In 1..N]);
IDC := IdealOfDoublingCoefficients(S, NCC, "a", "e", "t");
IsInRadical(IDelta, IDC);
--false
\end{verbatim}

At this point we check the reduced Gr\"obner basis 
of $E$ and get  the following set
$
\{a_1^5-e_1^5,\ 
a_2^5-e_2^5,\ 
a_2^2e_1-a_1^2e_2,\  a_1^2a_2-e_1^2e_2,  \
a_2e_1^3-a_1^3e_2, \
a_1a_2^3-e_1e_2^3, \
{ta_1a_2e_1e_2-1}, \\
ta_1^2e_2^3-a_2,\ 
te_1^3e_2^2-a_1,\ 
te_1^2e_2^4-a_2^2,\ 
ta_1^4e_2^2-e_1^2,\ 
ta_1e_1e_2^6-a_2^4\}
$.
Just looking at the first two polynomials, we see that if we restrict our 
check to real numbers, we get $a_1=e_1$, $a_2=e_2$, hence we may conclude that
the family is {\it real Hough regular}. 
\end{example}

%
%
%
%

\bigskip
%


\end{document}